\documentclass[11pt]{article}
\usepackage[utf8]{inputenc}
\usepackage{amsmath,amsfonts}
\usepackage{amsthm}
\usepackage{amssymb}
\usepackage{graphicx}
\usepackage{enumitem}
\usepackage{genyoungtabtikz}
\usepackage{ytableau}
\usepackage{indentfirst}
\usepackage{xy}
\usepackage{xypic}
\usepackage{float}
\allowdisplaybreaks

\def\Allowcells、
\def\borderhooks{boundary hooks}




\newtheorem{thm}{Theorem}
\newtheorem{ex}[thm]{Example}

\newtheorem{lem}[thm]{Lemma}
\newtheorem{cor}[thm]{Corollary}

\newtheorem{defn}[thm]{Definition}

\newtheorem{theorem}[thm]{Theorem}

\numberwithin{thm}{section}

\definecolor{lightblue}{rgb}{.85,.9,.96}
\definecolor{lightpurple}{rgb}{.9,.5,.9}
\definecolor{lightred}{rgb}{.9,.7,.7}
\definecolor{grey}{rgb}{.85,.85,.85}

\Yboxdimx{30pt}
\Ylinecolour{black}
\Ynodecolour{black!50!black}

\usepackage[colorinlistoftodos]{todonotes}

\DeclareMathOperator{\I}{\mathfrak{S}}

\DeclareMathOperator{\Sym}{Sym}

\DeclareMathOperator{\NSym}{NSym}

\DeclareMathOperator{\QSym}{QSym}

\DeclareMathOperator{\ndet}{\mathfrak{det}}

\DeclareMathSymbol{\shortminus}{\mathbin}{AMSa}{"39}

\author{Sarah Mason \and Jack Xie}
\title{A classification of nonzero skew immaculate functions}

\begin{document}
\maketitle

\paragraph{Abstract}

This article presents conditions under which the skewed version of immaculate noncommutative symmetric functions are nonzero.  The work is motivated by the quest to determine when the matrix definition of a skew immaculate function aligns with the Hopf algberaic definition.  We describe a necessary condition for a skew immaculate function to include a non-zero term, as well as a sufficient condition for there to be at least one non-zero term that survives any cancellation.  We bring in several classical theorems such as the Pigeonhole Principle from combinatorics and Hall's Matching Theorem from graph theory to prove our theorems.

\section{Background and Introduction}

A function with $n$ commuting variables is said to be \emph{symmetric} if it remains the same when the variables are permuted.  That is, the function $f(x_1, x_2, \hdots , x_n)$ is symmetric if $f(x_{\sigma_1}, x_{\sigma_2}, \hdots , x_{\sigma_n}) = f(x_1, x_2, \hdots , x_n)$ for any permutation $\sigma=(\sigma_1, \sigma_2, \hdots , \sigma_n)$ of the variable indices.   See Sagan~\cite{Sag01} or Stanley~\cite{Sta99} for an excellent introduction to symmetric functions.

One example of a symmetric function on three variables is $$f(x_1,x_2,x_3)=x_1^3x_2^2+x_1^3x_3^2+x_2^3x_3^2+x_1^2x_2^3+x_1^2x_3^3+x_2^2x_3^3,$$ as when we apply the permutation $(2,1)$ to the subscripts of $f(x_1,x_2,x_3),$ we get $$f(x_2,x_1,x_3)= x_2^3x_1^2+x_2^3x_3^2+x_1^3x_3^2+x_2^2x_1^3+x_2^2x_3^3+x_1^2x_3^3,$$ which is the same as $f(x_1,x_2,x_3)$. Notice the function $g(x_1,x_2)=x_1+x_2^2$ is not a symmetric function on two variables since applying the permutation $(2,1)$ produces $$g(x_2,x_1) = x_2+x_1^2 \not= g(x_1,x_2).$$

The ring (or vector space) of all symmetric functions is called $\Sym$.  Bases for $\Sym$ are indexed by \emph{partitions}, where a partition of $k$ is a weakly decreasing sequence of positive integers that sum to $k$.  The elements in such a sequence are referred to as its \emph{parts}, and the \emph{length} of a partition is the number of parts.  For example, $\lambda=(5,3,3,2)$ is a partition of $13$ with length $4$.  We write $\lambda \vdash n$ to indicate that $\lambda$ is a partition of $n$.  

In this article, we typically work over finitely many variables (often $x_1, x_2, \hdots , x_n)$ so that is the assumption if the variables are not given.  However, it is straightforward to extend these definitions to infinitely many variables.

The \emph{monomial symmetric functions} are one of the most natural bases for Sym.  The monomial symmetric function on $n$ variables indexed by a partition $\lambda$ is defined by $$m_{\lambda} (x_1, x_2, \hdots , x_n) = \sum x_{i_1}^{\lambda_1} x_{i_2}^{\lambda_2} \cdots x_{i_{\ell}}^{\lambda_{\ell}},$$ where the sum is over all distinct monomials with exponents $\lambda_1, \lambda_2, \hdots , \lambda_{\ell}$.  For example, $m_{211}(x_1,x_2,x_3)={x_1}^2{x_2}{x_3}+{x_1}{x_2}^2{x_3}+{x_1}{x_2}{x_3}^2$.

Another important basis for the vector space $\Sym$ is the \emph{complete homogeneous symmetric functions}, $\{ h_{\lambda} \}_{\lambda \vdash n}$.  The complete homogeneous basis is defined by $$h_k(x_1, x_2, \hdots , x_n)=\sum_{1 \le i_1 \le i_2 \le \cdots \le i_k \le n} x_{i_1} x_{i_2} \cdots x_{i_k}, \; \; \; \textrm{with}$$  $$h_{\lambda}(x_1, \hdots , x_n) = h_{\lambda_1} h_{\lambda_2} \cdots h_{\lambda_{\ell}}.$$ 

For example, $h_{3,1}(x_1,x_2,x_3)=h_3(x_1,x_2,x_3)h_1(x_1,x_2,x_3)=(x_1^3+x_2^3+x_3^3+x_1^2x_2 + x_1x_2^2+x_1^2x_3+x_1x_3^2+x_2^2x_3+x_2x_3^2+x_1x_2x_3)(x_1+x_2+x_3).$

The \emph{Schur functions} are another important basis for $\Sym$ with connections to many different branches of mathematics and beyond, including representation theory and algebraic geometry.  One way to define the Schur functions is through diagrams called \emph{semi-standard Young tableaux}.

To define a semi-standard Young tableau, we first describe the \emph{Ferrers diagram} of a partition.  Given a partition $\lambda=(\lambda_1, \hdots , \lambda_{\ell})$, draw a collection of left-justified boxes (often called \emph{cells}) so that there are $\lambda_i$ boxes in row $i$.  (Here, we let $\lambda_1$ be the bottom row and read from bottom to top, consistent with the \emph{French notation}.)  Such a diagram is called the \emph{Ferrers diagram} for the partition.  For example, the Ferrers diagram of $\lambda=(5,3,2,2)$ is depicted in Figure~\ref{fig:Ferrers}.

\begin{figure}
\begin{picture}(200,50)
\put(10,0){
\hbox{
\vbox{
\Yfillcolour{white}
\young(\ \ ,)
\vskip -2.5pt
\Yfillcolour{white}
\young(\ \ ,)
\vskip -2.5pt
\Yfillcolour{white}
\young(\ \ \,)
\vskip -2.5pt
\Yfillcolour{white}
\young(\ \ \ \ \,)
}}}

\put(190,0){
\hbox{
\vbox{
\Yfillcolour{white}
\young({7}{9},)
\vskip -2.5pt
\Yfillcolour{white}
\young({6}{7},)
\vskip -2.5pt
\Yfillcolour{white}
\young({3}{5}{5},)
\vskip -2.5pt
\Yfillcolour{white}
\young({2}{2}{2}{3}{5},)
}}}
\end{picture}
\label{fig:Ferrers}
\caption{The figure on the left is the Ferrers diagram for $\lambda=(5,3,2,2)$.  The figure on the right is a semi-standard Young tableau of shape $\lambda=(5,3,2,2)$ and weight $x_2^3x_3^2x_5^3x_6x_7^2x_9$.}
\end{figure}
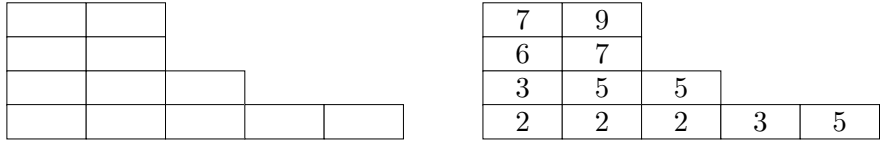

A \emph{semi-standard Young tableau} of shape $\lambda$ is a filling of the cells of the Ferrers diagram of $\lambda$ with positive integers so that the following conditions hold.  
\begin{enumerate}
\item The entries in each row weakly increase from left to right.
\item The entries in each column strictly increase from bottom to top.
\end{enumerate}

Each semi-standard Young tableau $\tau$ is assigned a weight $x^{\tau}=x_1^{a_1} x_2^{a_2} \cdots x_n^{a_n},$ where $a_i$ is the number of times $i$ appears in $\tau$.  See Figure~\ref{fig:Ferrers} for an example of a semi-standard Young tableau and its weight.

The \emph{Schur function} $s_{\lambda}$ is defined according to the following generating function. $$s_\lambda(x)=\sum_{\tau\in SSYT(\lambda)} x^{\tau},$$ where $SSYT(\lambda)$ is the set of all possible semi-standard young tableaux of shape $\lambda$.  This means that the Schur function indexed by shape $\lambda$ is the sum of all the monomials corresponding to the semi-standard young tableaux of shape $\lambda$. For example, the following semi-standard Young tableaux of shape $(2,1)$ on three variables produce the Schur function $s_{21}=x_1^2x_2 + x_1^2x_3+x_1x_2^2+2x_1x_2x_3 + x_1x_3^2+x_2^2x_3+x_2x_3^2 =m_{21}+2m_{111}$. 

\begin{picture}(400,120)
\put(-20,30){
\hbox{
\vbox{
\Yfillcolour{white}
\young({3} \  )
\vskip -2.5pt
\Yfillcolour{white}
\young({1}{3} \  )
}}}
\put(70,30){
\hbox{
\vbox{
\Yfillcolour{white}
\young({2} \  )
\vskip -2.5pt
\Yfillcolour{white}
\young({1}{3} \  )
}}}
\put(160,30){
\hbox{
\vbox{
\Yfillcolour{white}
\young({3} \  )
\vskip -2.5pt
\Yfillcolour{white}
\young({2}{2} \  )
}}}
\put(250,30){
\hbox{
\vbox{
\Yfillcolour{white}
\young({3} \  )
\vskip -2.5pt
\Yfillcolour{white}
\young({2}{3} \  )
}}}

\put(8,10){$x_1x_3^2$}
\put(94,10){$x_1x_2x_3$}
\put(188,10){$x_2^2x_3$}
\put(280,10){$x_2x_3^2$}
\put(8,65){$x_1^2x_2$}
\put(95,65){$x_1^2x_3$}
\put(190,65){$x_1x_2^2$}
\put(275,65){$x_1x_2x_3$}

\put(-20,80){
\hbox{
\vbox{
\Yfillcolour{white}
\young({2} \  )
\vskip -2.5pt
\Yfillcolour{white}
\young({1}{1} \  )
}}}
\put(70,80){
\hbox{
\vbox{
\Yfillcolour{white}
\young({3} \  )
\vskip -2.5pt
\Yfillcolour{white}
\young({1}{1} \  )
}}}
\put(160,80){
\hbox{
\vbox{
\Yfillcolour{white}
\young({2} \  )
\vskip -2.5pt
\Yfillcolour{white}
\young({1}{2} \  )
}}}
\put(250,80){
\hbox{
\vbox{
\Yfillcolour{white}
\young({3} \  )
\vskip -2.5pt
\Yfillcolour{white}
\young({1}{2} \  )
}}}
\end{picture}

We can also define Schur functions indexed by \emph{skew shapes}.  Given partitions $\lambda$ and $\nu$, if $\nu_i \le \lambda_i$ for all $i$, then $\lambda/\nu$ is the shape obtained by first drawing the diagram for $\lambda$ and then shading out the cells that are also contained in the diagram of $\nu$. 
 For example, the diagrams in Figure~\ref{fig:skewschur} are all semi-standard Young tableau of shape $22/1$.  Compute the skew Schur function indexed by $\lambda/\nu$ by filling the non-shaded cells with positive integers so that again the row entries weakly increase left to right and the column entries strictly increase bottom to top. 

\begin{figure}
\begin{picture}(400,80)
\put(0,20){
\hbox{
\vbox{
\Yfillcolour{white}
\young({3}{3} \  )
\vskip -2.5pt
\Yfillcolour{white}
\young(!<\Yfillcolour{grey}>\  !<\Yfillcolour{white}>{1} \  )
}}}
\put(90,20){
\hbox{
\vbox{
\Yfillcolour{white}
\young({1}{3} \  )
\vskip -2.5pt
\Yfillcolour{white}
\young(!<\Yfillcolour{grey}>\  !<\Yfillcolour{white}>{2} \  )
}}}
\put(180,20){
\hbox{
\vbox{
\Yfillcolour{white}
\young({2}{3} \  )
\vskip -2.5pt
\Yfillcolour{white}
\young(!<\Yfillcolour{grey}>\  !<\Yfillcolour{white}>{2} \  )
}}}
\put(270,20){
\hbox{
\vbox{
\Yfillcolour{white}
\young({3}{3} \  )
\vskip -2.5pt
\Yfillcolour{white}
\young(!<\Yfillcolour{grey}>\  !<\Yfillcolour{white}>{2} \  )
}}}

\put(28,0){$x_1x_3^2$}
\put(114,0){$x_1x_2x_3$}
\put(208,0){$x_2^2x_3$}
\put(300,0){$x_2x_3^2$}
\put(28,55){$x_1^2x_2$}
\put(115,55){$x_1^2x_3$}
\put(210,55){$x_1x_2^2$}
\put(295,55){$x_1x_2x_3$}

\put(0,70){
\hbox{
\vbox{
\Yfillcolour{white}
\young({1}{2} \  )
\vskip -2.5pt
\Yfillcolour{white}
\young(!<\Yfillcolour{grey}>\  !<\Yfillcolour{white}>{1} \  )
}}}
\put(90,70){
\hbox{
\vbox{
\Yfillcolour{white}
\young({1}{3} \  )
\vskip -2.5pt
\Yfillcolour{white}
\young(!<\Yfillcolour{grey}>\  !<\Yfillcolour{white}>{1} \  )
}}}
\put(180,70){
\hbox{
\vbox{
\Yfillcolour{white}
\young({2}{2} \  )
\vskip -2.5pt
\Yfillcolour{white}
\young(!<\Yfillcolour{grey}>\  !<\Yfillcolour{white}>{1} \  )
}}}
\put(270,70){
\hbox{
\vbox{
\Yfillcolour{white}
\young({2}{3} \  )
\vskip -2.5pt
\Yfillcolour{white}
\young(!<\Yfillcolour{grey}>\  !<\Yfillcolour{white}>{1} \  )
}}}
\end{picture}
\caption{The skew Schur function $s_{22/1}(x_1,x_2,x_3)=x_1^2x_2 + x_1^2x_3+x_1x_2^2+x_1x_2x_3 + x_1x_3^2+x_1x_2x_3+x_2^2x_3+x_2x_3^2$ is generated by the diagrams above.}
\label{fig:skewschur}
\end{figure}
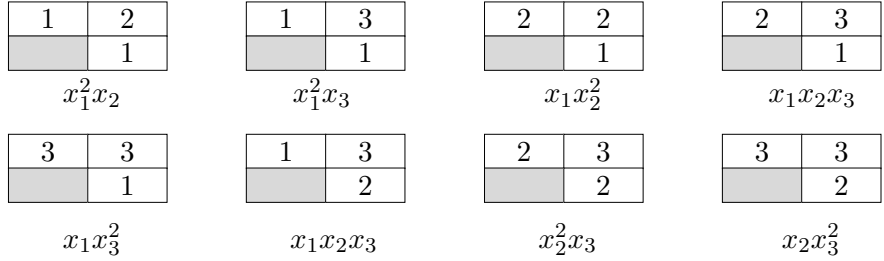

The \emph{Jacobi-Trudi Formula} provides the expansion of a Schur function into complete homogeneous functions.  In the following, to reduce to the situation of a Schur function indexed by a single partition (rather than a skew shape), set $\nu=(0,0, \hdots , 0)$.

\begin{theorem}[Jacobi-Trudi Formula]{\cite{Mac95}}
Let $\lambda/\nu$ be a skew shape.  Then 
\begin{equation}
s_{\lambda/\nu}=\det(h_{\lambda_i-i-(\nu_j-j)})_{i,j}.
\label{E:SymJacobiTrudi}\end{equation}\label{T:SymJacobiTrudi}
\end{theorem}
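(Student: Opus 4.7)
The plan is to prove the Jacobi-Trudi formula combinatorially via the Lindström-Gessel-Viennot lemma, which interprets the determinant as a signed sum over tuples of lattice paths. The starting observation is that $h_k$ has a lattice-path model: if we place a weight $x_j$ on every horizontal step at height $j$, then $h_k$ equals the generating function of lattice paths in $\mathbb{Z}^2$ using $k$ east steps and arbitrarily many north steps between two fixed points whose horizontal coordinates differ by $k$. This is because choosing such a path amounts to choosing a weakly increasing multiset of $k$ heights.

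The next step is to set up the right endpoints. For $\lambda/\nu$ of length $\ell$, I would place sources $A_j = (\nu_j - j, 0)$ and sinks $B_i = (\lambda_i - i, N)$ for $N$ large (or at height $n$ if we restrict to $n$ variables). Then a single east-north lattice path from $A_{\sigma(i)}$ to $B_i$ has horizontal displacement $\lambda_i - i - (\nu_{\sigma(i)} - \sigma(i))$ and so contributes exactly $h_{\lambda_i - i - (\nu_{\sigma(i)} - \sigma(i))}$ to the generating function. Expanding
\[
\det\bigl(h_{\lambda_i - i - (\nu_j - j)}\bigr)_{i,j} = \sum_{\sigma} \mathrm{sgn}(\sigma) \prod_{i} h_{\lambda_i - i - (\nu_{\sigma(i)}-\sigma(i))}
\]
therefore expresses the right-hand side of \eqref{E:SymJacobiTrudi} as a signed weighted sum over all tuples of paths $(P_1, \ldots, P_\ell)$ where $P_i$ goes from $A_{\sigma(i)}$ to $B_i$.

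The heart of the argument, and the main obstacle, is the sign-reversing involution. Given any tuple of paths in which at least two paths share a lattice point, choose the lexicographically smallest pair $(i,i')$ with $i<i'$ whose paths cross, then choose the first intersection point along $P_i$, and swap the tails of $P_i$ and $P_{i'}$ after that point. This produces a new tuple with the same total weight but whose associated permutation differs from the original by the transposition $(i\, i')$, hence by a sign. The key verifications are that this map is a well-defined involution, that it pairs up every tuple whose paths intersect, and that these paired contributions cancel in the signed sum. What survives is the contribution from non-intersecting tuples only.

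Finally, I would observe that because $\nu_j - j$ and $\lambda_i - i$ are both strictly decreasing in their indices, a non-intersecting tuple forces $\sigma$ to be the identity, so every surviving tuple contributes with a positive sign. The remaining step is a weight-preserving bijection between non-intersecting path tuples $(P_1,\ldots,P_\ell)$ from $A_i$ to $B_i$ and semi-standard Young tableaux of shape $\lambda/\nu$: interpret the heights of the east steps of $P_i$, read left to right, as the entries in row $i$ (reading convention adjusted to match French notation). The row-weak-increase is automatic from the east-step ordering, and the column-strict-increase is exactly the non-intersection condition applied to consecutive paths $P_i, P_{i+1}$. Summing the weights over all SSYT then yields $s_{\lambda/\nu}$, completing the proof.
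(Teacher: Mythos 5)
The paper does not prove this theorem; it is stated as classical background with a citation to Macdonald, whose own derivation is algebraic (via the generating-function relations between the $h$'s and $e$'s and the bialternant/duality machinery). Your proposal instead follows the Gessel--Viennot lattice-path argument, which is the other standard route, and your outline is correct: the path model for $h_k$, the identification of the determinant's $\sigma$-term with path tuples from $A_{\sigma(i)}=(\nu_{\sigma(i)}-\sigma(i),\cdot)$ to $B_i=(\lambda_i-i,\cdot)$, the cancellation of intersecting tuples, the observation that strict decrease of $\nu_j-j$ and $\lambda_i-i$ forces $\sigma=\mathrm{id}$ on the survivors, and the bijection sending the east-step heights of $P_i$ to row $i$ of an SSYT (with non-intersection of adjacent paths giving column-strictness) are all the right ingredients. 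The one point you flag but do not carry out is the genuine delicate step: showing the tail-swapping map is an involution. The naive rule ``take the lexicographically smallest crossing pair and the first intersection along $P_i$'' can fail to be stable under the swap (after swapping, a different pair may become lexicographically smallest), so one must fix a more careful selection rule --- e.g., take the smallest index $i$ whose path meets another, the first intersection point on $P_i$ in path order, and the smallest $i'>i$ through that point --- and verify stability. With that repair, and with the bookkeeping that sources/sinks live at heights $0$ and $n$ so east-step heights land in $\{1,\dots,n\}$, your argument is complete; the combinatorial approach buys a bijective explanation of the positivity of the $h$-to-$s$ transition that the algebraic proof in the cited source does not make visible.
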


For example, let $\lambda=(2,2)$ and $\nu=(1)$.  Then we obtain the matrix $\begin{bmatrix} 
h_1&h_3 \\
h_0&h_2
\end{bmatrix}$.  Taking the determinant of this matrix produces the Schur function expansion $s_{22/1}=h_1h_2-h_3h_0=h_{2,1}-h_3$.

Since Schur functions form a basis for symmetric functions, the product of two arbitrary Schur functions can be written as a sum of Schur functions.  The coefficients appearing in this expansion are called \emph{Littlewood-Richardson coefficients}.  In particular, if $$s_{\nu} s_{\mu} = \sum_{\lambda} c_{\nu, \mu}^{\lambda} s_{\lambda},$$ then the terms $c_{\nu, \mu}^{\lambda}$ are the Littlewood-Richardson coefficients.  These numbers are also connected to skew Schur functions, as $$s_{\lambda/\nu} = \sum_{\mu} c_{\nu, \mu}^{\lambda} s_{\mu}.$$

The following example demonstrates this relationship.  Consider the Schur function product 
$$s_{32}s_{51}=s_{5321}+s_{533}+s_{542}+s_{6221}+s_{6311}+2s_{632}+s_{641}+s_{722}+s_{7211}+$$ $$2s_{731}+s_{74}+s_{821}+s_{83}$$ and skew Schur function expansion $$s_{632/51}=s_{221}+s_{311}+2s_{32}+s_{41}.$$ 
 Notice the coefficient of $s_{632}$ in the product $s_{32}s_{51}$ is exactly the same as the coefficient of $s_{32}$ in the expansion of the skew $s_{632/51}$.

In what follows, we will need several generalizations of partitions, which can be thought of as special cases of integer sequences.  A partition is a weakly decreasing sequence of positive integers, while a \emph{composition} is an ordered sequence of positive integers; the entries in a composition are not required to be weakly decreasing.  A \emph{weak composition} is an ordered sequence of non-negative integers.

The ring of symmetric functions can be generalized in a number of ways.  One generalization is called the ring $\NSym$ of \emph{noncommutative symmetric functions}, which can be thought of as a ring or vector space over real numbers generated by the \emph{complete homogeneous noncommutative symmetric} functions $H_0, H_1, H_2, \hdots$ with no relations.  That is, $\NSym=\mathbb{R} \langle H_0, H_1, \hdots \rangle$ where $H_a=0$ if $a<0$ and $H_0=1$. In this paper, we use the terminology ``zero terms", ``terms equalling zero" and ``$H$ negatives" interchangeably, but they all mean H functions with negative subscripts, since $H_a=0$ for all $a<0$.  Since there are no relations on the $H$ functions, we have $H_aH_b \not= H_b H_a$, and we define $H_{(a_1,a_2, \hdots , a_{\ell})}=H_{a_1}H_{a_2} \cdots H_{a_{\ell}}$.  The elements $H_{\alpha}$ (indexed by compositions) form a basis for $\NSym$.

Another important basis for $\NSym$ is the set of \emph{immaculate functions}.  Immaculate functions were introduced by Berg, Bergeron, Saliola, Serrano, and Zabrocki through creation operators~\cite{BBSSZ14} but can be described in terms of the homogeneous functions via an analogue of the Jacobi-Trudi Formula.  We take the following formula as the definition.

\begin{defn}{\cite{BBSSZ14}}
Let $\mu \in \mathbb{Z}^k$ be a sequence of integers and let $(M_{\mu})_{i,j}=H_{(\mu_i-i+j)}$.  Then the \emph{immaculate function}  $\I_{\mu}$ is given by
\begin{equation}
\I_{\mu}=\ndet(M_{\mu}),
\end{equation}
where the noncommutative determinant $\ndet$ is computed using Laplace expansion starting in the top row and continuing sequentially to the bottom row.
\end{defn}

The immaculate functions reduce to the Schur functions under the \emph{forgetful map} from $\NSym$ to $\Sym$, which ``forgets" that the variables don't commute.  The vector space dual to $\NSym$ is a called the \emph{quasisymmetric functions}, often abbreviated by $\QSym$.  A function $f$ is said to be \emph{quasisymmetric} if for all compositions $\alpha=(\alpha_1, \alpha_2, \hdots , \alpha_{\ell})$, the coefficient of $x_{1}^{\alpha_1} x_{2}^{\alpha_2} \cdots x_{\ell}^{\alpha_{\ell}}$ in $f$ is equal to coefficient of $x_{j_1}^{\alpha_1} x_{j_2}^{\alpha_2} \cdots x_{j_{\ell}}^{\alpha_{\ell}}$ in $f$ for any sequence of positive integers $1 \le j_1 < j_2 < \cdots < j_{\ell}$.

A polynomial in $n$ variables is said to be \emph{quasisymmetric} if the coefficient of the monomial $x_1^{\alpha_1} x_2^{\alpha_2} \cdots x_{\ell}^{\alpha_{\ell}}$ is equal to the coefficient of $x_{i_1}^{\alpha_1} x_{i_2}^{\alpha_2} \cdots x_{i_{\ell}}^{\alpha_{\ell}}$ for all $i_1 < i_2< \cdots < i_{\ell}$ and all compositions $\alpha$.  The \emph{dual immaculate basis} $\{\I_{\alpha}^{\star}\}_{\alpha}$ in $\QSym$ is the basis dual to the immaculate basis. 
 One open problem is to find a formula for the coefficients appearing when the product of two dual immaculates is expanded into the dual immaculate basis.  Allen and Mason generalized the Jacobi-Trudi formula for skew Schur functions to $\NSym$ to produce the following candidate for skew immaculate functions.

\begin{defn}{\label{def:skewdef}}{\cite{AllMas23}}
Let $\alpha$ and $\beta$ be compositions and set $(M_{\alpha/\beta})_{i,j}=H_{(\alpha_i-i)-(\beta_j-j)}$.  We call this the \emph{matrix associated to $\alpha$ and $\beta$} (or sometimes just the \emph{associated matrix} for brevity).  Then the \emph{skew immaculate function} is given by
\begin{equation}
\I_{\alpha/\beta}=\ndet(M_{\alpha/\beta}),
\end{equation}
where the noncommutative determinant $\ndet$ is expanded using Laplace expansion starting in the top row and continuing sequentially to the bottom row.
\end{defn}

For example, let $\alpha=(6,4,3)$ and let $\beta=(2,4,1)$.  Then
$\I_{643/241}=\ndet(M_{643/241})$.  The associated matrix is given by
$$M_{643/241}=\begin{bmatrix} H_4 & H_3 & H_7 \\ H_1 & H_{0} & H_4 \\ H_{-1} & H_{-2} &H_{2}\end{bmatrix}.$$

By Laplace expansion, $\ndet(M_{643/241})=H_4 H_0 H_2-H_4 H_4 H_{-2}-H_3 H_1 H_2+H_3 H_4H_{-1}+H_7 H_1 H_{-2}-H_7 H_0 H_{-1}$.  Since $H_k=0$ for all $k<0$ and $H_a H_b=H_{ab}$, we have $\I_{643/241}=\ndet(M_{643/241})=H_{42}-H_{312}$.

Unfortunately, the coefficients appearing when a skew immaculate is written as a sum of immaculates do not always align with those appearing in products of dual immaculates.  For example, $\I_{33/22}= \I_{11}$, however, the term $\I_{33}$ appears in the product $\I_{22}^{\star} \I_{2}^{\star}$ as well as the product $\I_{22}^{\star} \I_{11}^{\star}$.  If the skew immaculate construction were compatible with multiplication in the dual, we would expect the coefficient of $\I_{2}$ in $\I_{33/22}$ to also equal $1$.  It is an open problem to classify which skew immaculates correspond to multiplication in this way.  Skew immaculate functions equal to $0$ generally do not correspond to multiplication, so it is useful to focus on non-zero skew immaculate functions.  There are a large number of skew immaculates equal to zero, to the extent that including them in computations significantly slows down processing speed.  Therefore, it would be useful to have a formula to rule out classes of skew shapes $\alpha/\beta$ for which $\I_{\alpha/\beta}=0$.  The main goal of this paper is therefore to describe a collection of pairs $(\alpha,\beta)$ of compositions such that $\I_{\alpha/\beta} \not= 0$.

In Section~\ref{sec:necessary}, we provide a necessary condition for the $H$-basis expansion of $\I_{\alpha/\beta}$ to include at least one non-zero term.  Section~\ref{sec:Hall} provides background on Hall's Matching Theorem.  In Section~\ref{sec:sufficient}, we use Hall's Matching Theorem to prove that in fact our conditions for a nonzero term are sufficient, but it is possible for cancellation to occur.  Finally in Section~\ref{sec:cancel} we describe a class of composition pairs for which the corresponding immaculate function is nonzero even after cancellation.

\section{Determinants in $\NSym$}{\label{sec:necessary}}

Our starting point is the expansion of a skew immaculate into the complete homogeneous basis for $\NSym$, given by Definition~\ref{def:skewdef}.  A skew immaculate function $\I_{\alpha/\beta}$ will be zero if either $H_{\gamma}=0$ for each $H_{\gamma}$ appearing in the $H$-basis expansion of $\I_{\alpha/\beta}$ or if all of the non-zero terms cancel out.

In the following, we abuse notation by referring to the complete homogeneous noncommutative symmetric functions as simply the homogeneous basis for $\NSym$ or even the $H$-basis.  We first describe a sufficient condition for every $H_{\gamma}$ to be zero.  To do this, we need the following definition.

\begin{defn}
Let $\alpha=(\alpha_1, \alpha_2, \hdots, \alpha_{\ell})$ be a composition.  Define $\hat{\alpha}=(\alpha_1-1, \alpha_2,-2 , \hdots , \alpha_{\ell}-\ell)$ to be the sequence obtained by subtracting $i$ from the $i^{th}$ part of $\alpha$ for all $i$.
\end{defn}

\begin{ex}{\label{ex:negs}}
If $\alpha=(3,2,3,5,1)$ then $\hat{\alpha}=(2,0,0,1,-4))$.  
\end{ex}
Notice if every term in the expansion of a skew immaculate function into the homogeneous basis is $0$, then there is a relationship between the number of negative entries in the expansion matrix and the number of rows of the matrix that contain the negative entries. For example, if $\alpha=(5,7,1,3)$ and $\beta=(5,5,5,1)$, we get $\hat{\alpha}=(4,5,-2,-1)$ and $ \hat{\beta}=(4,3,2,-3)$. This results in the following matrix, whose noncommutative determinant equals zero.

$$\begin{bmatrix} H_0 & H_1 & H_2 & H_7 \\ H_1 & H_{2} & H_3 & H_8 \\ H_{-6} & H_{-5} &H_{-4} & H_1 \\ H_{-5}&H_{-4}&H_{-3}&H_2\end{bmatrix} \hspace*{0.5in}$$

Notice that two of the rows in this matrix contain negative entries and in each such row there are three negative entries.  It is therefore natural to look for a general condition on the number of negative entries in the rows containing negative entries which forces all the terms in the $H$-basis expansion to be zero.  The following theorem makes this idea precise.

\begin{theorem}{\label{thm:zeros}}
Let $\alpha=(\alpha_1,...,\alpha_{\ell})$ and $\beta=(\beta_1,...,\beta_{\ell})$ be compositions.  If there is at least one non-zero term (before cancellation) in the $H$-basis expansion of $\I_{\alpha/\beta}$, then for every $k$ between $1$ and $\ell$ inclusive, set of all parts of $\hat{\alpha}$ which are smaller than at least $\ell-k+1$ entries in $\hat{\beta}$ has cardinality less than or equal to $k-1$.
\end{theorem}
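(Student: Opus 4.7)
The plan is to work directly with the Laplace expansion defining $\ndet(M_{\alpha/\beta})$. Each term of that expansion is indexed by a permutation $\sigma \in S_\ell$ and has the form $\pm H_{\hat{\alpha}_1 - \hat{\beta}_{\sigma(1)}} H_{\hat{\alpha}_2 - \hat{\beta}_{\sigma(2)}} \cdots H_{\hat{\alpha}_\ell - \hat{\beta}_{\sigma(\ell)}}$, where I write $\hat{\beta}_j = \beta_j - j$ in parallel with $\hat{\alpha}_i$. Because $H_a = 0$ exactly when $a < 0$, such a term survives (before cancellation) if and only if $\hat{\alpha}_i \geq \hat{\beta}_{\sigma(i)}$ for every row index $i$.

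I would prove the contrapositive. Suppose for some $k \in \{1,\ldots,\ell\}$ the set $S_k$ of row indices $i$ for which $\hat{\alpha}_i$ is smaller than at least $\ell - k + 1$ entries of $\hat{\beta}$ has cardinality at least $k$. For each row $i$, let $T_i = \{\,j : \hat{\beta}_j \leq \hat{\alpha}_i\,\}$ be the set of columns that produce a non-negative subscript when paired with row $i$. The membership condition $i \in S_k$ is then equivalent to $|T_i| \leq k - 1$. I aim to show that under this hypothesis every permutation $\sigma$ makes some factor vanish, so no non-zero term appears in the expansion at all.

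The structural observation that turns this into a one-line pigeonhole is that $T_i$ depends only on the value $\hat{\alpha}_i$, monotonically: if $\hat{\alpha}_i \leq \hat{\alpha}_{i'}$ then $T_i \subseteq T_{i'}$. Hence $\{T_i : i \in S_k\}$ is totally ordered by inclusion, and if $i^\ast \in S_k$ attains the maximum of $\hat{\alpha}$ over $S_k$, then
\[
\bigcup_{i \in S_k} T_i \;=\; T_{i^\ast}, \qquad \text{so} \qquad \Bigl|\bigcup_{i \in S_k} T_i\Bigr| \leq k - 1.
\]
Any surviving permutation $\sigma$ must send each $i \in S_k$ into $T_i$, hence into this union; this would inject a set of at least $k$ elements into a set of at most $k - 1$ elements, contradicting the pigeonhole principle.

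The main thing to get right is the nesting property of the $T_i$'s, which is what collapses what might otherwise demand Hall's Matching Theorem into a pigeonhole argument; the stronger matching machinery appears reserved for the sufficiency direction in Section~\ref{sec:sufficient}. A secondary subtlety is that the theorem is phrased in terms of ``parts of $\hat{\alpha}$'' rather than row indices, but since the Laplace expansion sees rows (with multiplicity), treating $S_k$ as a set of indices is both natural and sufficient for the pigeonhole count.
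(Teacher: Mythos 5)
Your proof is correct, and it arrives at the same final pigeonhole step as the paper, but it reaches that step by a genuinely lighter route. The paper first establishes the negative-crossing property of $M_{\alpha/\beta}$ (Theorem~\ref{thm:neg-cross}) and then deduces in Corollary~\ref{cor:zerocols} that the zero-columns of any collection of rows with many zeros have a large common intersection; only then does it count the at most $k-1$ remaining columns and apply pigeonhole. You replace that two-step structural analysis with the single observation that the set $T_i=\{\,j:\hat{\beta}_j\le\hat{\alpha}_i\,\}$ of admissible columns for row $i$ depends monotonically on the value $\hat{\alpha}_i$, so the family $\{T_i\}_{i\in S_k}$ is a chain under inclusion and its union is the single largest $T_{i^\ast}$, of cardinality at most $k-1$. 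This is exactly the content of the negative-crossing property and its corollary, but derived in one line, and it makes transparent \emph{why} the forbidden $2\times 2$ crossing patterns cannot occur. Your closing remark about parts versus indices is also well taken: working with the index set proves the formally stronger conclusion (the multiset of offending rows, not just the set of offending values, has size at most $k-1$), which is what the Laplace expansion actually requires; the paper's own proof implicitly does the same.
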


Theorem~\ref{thm:zeros} provides a necessary condition for a skew immaculate to be non-zero. However, before we prove Theorem~\ref{thm:zeros}, it is helpful to look at a simpler case to better understand the arguments the main proof.

\begin{lem}{\label{lem:onebigger}}
Let $\alpha=(\alpha_1,...,\alpha_{\ell})$ and $\beta=(\beta_1,...,\beta_{\ell})$ be compositions.  If there is at least one non-zero term (before cancellation) in the $H$-basis expansion of $\I_{\alpha/\beta}$, then for every entry $a \in \hat{\alpha}$, there exists an entry $b$ in $\hat{\beta}$ such that $a \ge b$.
\end{lem}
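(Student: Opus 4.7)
The plan is to directly exhibit, for each $a \in \hat{\alpha}$, an element of $\hat{\beta}$ that $a$ dominates, using the permutation that indexes a nonzero term in the noncommutative determinant.

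First I would unpack the Laplace expansion of $\ndet(M_{\alpha/\beta})$ described in Definition~\ref{def:skewdef}. Expanding row by row produces one term for each permutation $\sigma$ of $\{1, 2, \ldots, \ell\}$, namely $\pm\, H_{\hat{\alpha}_1 - \hat{\beta}_{\sigma(1)}}\, H_{\hat{\alpha}_2 - \hat{\beta}_{\sigma(2)}} \cdots H_{\hat{\alpha}_\ell - \hat{\beta}_{\sigma(\ell)}}$, where the factors appear in row order from left to right. Since $H_k = 0$ whenever $k < 0$, the term indexed by $\sigma$ is nonzero if and only if $\hat{\alpha}_i \ge \hat{\beta}_{\sigma(i)}$ for every $i \in \{1, \ldots, \ell\}$.

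Next, I would invoke the hypothesis: at least one nonzero term exists (before any cancellation), so there is a permutation $\sigma$ with $\hat{\alpha}_i \ge \hat{\beta}_{\sigma(i)}$ for each $i$. Given an arbitrary $a \in \hat{\alpha}$, write $a = \hat{\alpha}_i$ for some $i$; then $b := \hat{\beta}_{\sigma(i)}$ is an entry of $\hat{\beta}$ satisfying $a \ge b$, which is exactly the desired conclusion.

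There is no real obstacle here, since the lemma is essentially a translation of the observation that a nonzero Laplace term is precisely a bijection between the rows and the columns of $M_{\alpha/\beta}$ in which each $\hat{\alpha}_i$ dominates its partner $\hat{\beta}_{\sigma(i)}$. The value of stating the proof in this matching language is that it will set the stage for the more refined count in Theorem~\ref{thm:zeros}, and ultimately for the Hall's Matching Theorem argument in Section~\ref{sec:sufficient}, where a full system of distinct representatives (rather than just one partner per row) is what guarantees the existence of a nonzero term.
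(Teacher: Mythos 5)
Your proof is correct and rests on the same observation as the paper's: a nonzero Laplace term selects one entry per row along a permutation $\sigma$, and each factor $H_{\hat{\alpha}_i - \hat{\beta}_{\sigma(i)}}$ survives only if $\hat{\alpha}_i \ge \hat{\beta}_{\sigma(i)}$. The only difference is presentational --- you argue directly from an existing nonzero term, while the paper proves the contrapositive (an entry of $\hat{\alpha}$ below every entry of $\hat{\beta}$ forces an all-zero row) --- so this is essentially the same proof.
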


\begin{proof}
Let $\alpha=(\alpha_1,...,\alpha_l)$ and $\beta=(\beta_1,...,\beta_l)$ be compositions.  To prove Lemma~\ref{lem:onebigger}, we simply need to prove its contrapositive.  To do this, we assume that there exists at least one entry in $\hat{\alpha}$ which is smaller than all entries in $\hat{\beta}$. We prove that, under this assumption, every term in the $H$-basis expansion of $\I_{\alpha/\beta}$ is zero.

Definition~\ref{def:skewdef} states that $\I_{\alpha/\beta}$ is given by the non-commutative determinant of the matrix $M_{\alpha/\beta}$ such that $(M_{\alpha/\beta})_{i,j}=H_{\alpha_i-i -(\beta-j)}$.  We also know that $H_p=0$  for all $p<0$.  If the matrix $M_{\alpha/\beta}$ contains an entire row of H functions with negative subscript, every term in the determinant must use an entry from this row and therefore $\I_{\alpha/\beta}=0$. 

Let $i$ be the subscript such that $\hat{\alpha}_i < \hat{\beta}_j$ for all $1 \le j \le \ell$.  Then $\alpha_i-i-(\beta_j-j)<0$ for all $1 \le j \le \ell$, so $H_{\alpha_j-j - (\beta_i-i)} =0$ for  all $1 \le j \le \ell$.  Therefore row $i$ in matrix $M_{\alpha/\beta}$ consists of all zero terms and the non-commutative determinant is $0$.
\end{proof}

Lemma~\ref{lem:onebigger} is the case $k=1$ of Theorem~\ref{thm:zeros}.  We now expand Lemma~\ref{lem:onebigger} to larger $k$ values. To do so, we need to consider the non-commutative determinant as a permutation.  To calculate each term of the determinant of an $n \times n$ matrix, the Laplace expansion is equivalent to selecting one entry from the first row, then an entry from the second row appearing in a different column than the entry selected from the first row, and so on.  This means that each column is selected exactly once and the entries selected for a specific term in the determinant form a permutation matrix when the selected entries are given a value of $1$ and all other entries are given the value $0$.

\begin{ex}
Let $\alpha=(4,1,6,5)$ and $\beta=(2,1,3,2)$. 
 The associated matrix is $M_{\alpha/\beta}=\begin{bmatrix} H_2 & H_4 & H_3 & H_5 \\ H_{-2} & H_{0} & H_{-1} & H_1 \\ H_{2} & H_{4} &H_{3} & H_5 \\ H_{0}&H_{2}&H_{1}&H_3\end{bmatrix}. \hspace*{0.5in}$

The term $-H_4H_1H_{2}H_1$ is one of the terms appearing in the Laplace expansion of the determinant for $\I_{\alpha/\beta}$.  This term is obtained by selecting the the second entry in the first row, the fourth entry in the second row, the first entry in the third row and the third entry in the fourth row.  Therefore it corresponds to the permutation $2413$.

On the other hand, $H_2H_1H_{5}H_0$ is not a term in the expansion although it does take a term from each row, because $H_1$ and $H_5$ reside in the same column.
\end{ex}

We now introduce another property of the matrix used to construct the skew immaculate functions. 

\begin{defn}
A matrix $M$ whose entries are of the form $H_n$ is said to have the \emph{negative-crossing property} if for every $2 \times 2$ submatrix $\begin{bmatrix} H_a & H_b \\ H_c & H_d \end{bmatrix}$, the following two statements are always true.  
\begin{enumerate}
\item If $a<0, b \ge 0,$ and $c \ge 0$, then $d \ge 0$.  
\item If $a \ge 0, b < 0,$ and $c < 0$, then $d < 0$.
\end{enumerate}
\end{defn}

We illustrate this property with the following two sub-matrices of $M_{\alpha/\beta}$.  Here, we use inequality signs to indicate the relationships between the subscripts and zero.

 $$\begin{bmatrix} H_{<} & H_{\ge} \\ H_{\ge} & H_{x}\end{bmatrix} \hspace*{0.5in}  \begin{bmatrix} H_{\ge} & H_{<} \\ H_{<} & H_{y}\end{bmatrix} \hspace*{0.5in}$$  If our matrix has the negative-crossing property, we must have the subscripts $x \ge 0$ and $y < 0$.  We now prove that every associated matrix $M_{\alpha/\beta}$ has the negative-crossing property.

\begin{theorem}{\label{thm:neg-cross}}
Let $\alpha$ and $\beta$ be compositions of length $\ell$.  Then the matrix $M_{\alpha/\beta}$ associated to $\I_{(\alpha/\beta)}$ has the negative-crossing property.  That is, assume $m, n, r,s  \in(1,2,3,...,l)$ with $m <n$ and $r<s$. 
 If $M_{r,m}=0, M_{r,n} \neq  0,$ and $M_{s,m}\neq0$, then $M_{s,n} \neq 0.$  Also if $M_{r,m}\neq0, M_{r,n} =  0$, and $M_{s,m}=0$, then $M_{s,n} = 0.$
\end{theorem}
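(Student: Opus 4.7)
The plan is to translate the negative-crossing property into a single additive identity among the four subscripts in a $2\times 2$ sub-matrix, and then read off both cases by inspection. Recall from Definition~\ref{def:skewdef} that the $(i,j)$-entry of $M_{\alpha/\beta}$ is $H_{\hat{\alpha}_i - \hat{\beta}_j}$, where $\hat{\alpha}_i = \alpha_i - i$ and $\hat{\beta}_j = \beta_j - j$. Since $H_p$ vanishes precisely when $p<0$, the entry $(M_{\alpha/\beta})_{i,j}$ is zero exactly when $\hat{\alpha}_i < \hat{\beta}_j$, and is nonzero exactly when $\hat{\alpha}_i \ge \hat{\beta}_j$.

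First I would fix rows $r<s$ and columns $m<n$ and label the four subscripts of the sub-matrix
\[
\begin{bmatrix} H_a & H_b \\ H_c & H_d \end{bmatrix}
\]
so that
\begin{align*}
a &= \hat{\alpha}_r - \hat{\beta}_m, & b &= \hat{\alpha}_r - \hat{\beta}_n, \\
c &= \hat{\alpha}_s - \hat{\beta}_m, & d &= \hat{\alpha}_s - \hat{\beta}_n.
\end{align*}
The crucial observation is the additive identity
\[
a + d \;=\; \hat{\alpha}_r + \hat{\alpha}_s - \hat{\beta}_m - \hat{\beta}_n \;=\; b + c,
\]
equivalently $d = b + c - a$. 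In other words, the fourth subscript of any $2\times 2$ sub-matrix of $M_{\alpha/\beta}$ is forced by the other three; this rigidity is exactly what drives the negative-crossing property.

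Given $d = b + c - a$, the two cases in the statement follow by one-line inequality arguments. For Case~1, if $a<0$ and $b,c\ge 0$, then $-a>0$ and $b+c\ge 0$, so $d=(b+c)+(-a)>0$; in particular $d\ge 0$ and $M_{s,n}\neq 0$. For Case~2, if $a\ge 0$ and $b,c<0$, then $-a\le 0$ and $b+c<0$, so $d=(b+c)+(-a)<0$ and $M_{s,n}=0$, i.e.\ $M_{s,n}$ equals zero.

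The only real obstacle is spotting the identity $a+d=b+c$; once it is in hand no sub-case analysis or induction is needed, and the hypotheses $r<s$, $m<n$ serve only to pin down the ``top-left/bottom-right'' labeling used in the definition of the negative-crossing property. If I were worried about anything, it would be tracking indices carefully so that the positional conditions in the definition line up with the intended entries of the sub-matrix, but this is a bookkeeping matter rather than a mathematical one.
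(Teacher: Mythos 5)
Your proof is correct and follows essentially the same route as the paper: both arguments rest on the additive structure of the subscripts $\hat{\alpha}_i-\hat{\beta}_j$, with the paper chaining inequalities through the intermediate fact $\hat{\beta}_m > \hat{\beta}_n$ (resp.\ $<$) while you package the same computation as the identity $a+d=b+c$. Your version is a slightly tidier presentation of the same observation, and both cases check out.
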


\begin{proof}
Assume $M_{r,m}=0$.  This means $(\alpha_r-r)-(\beta_m-m)<0$.  If $ M_{r,n}\neq0$ for $n>m$, then $(\alpha_r-r)-(\beta_n-n)\geq0$. Rearrange these two equations to obtain $\beta_m-m>\alpha_r-r$ and $\alpha_r-r \ge \beta_n-n$. Combining these two expressions produces the inequality $\beta_m-m > \beta_n-n$. Therefore,  for all $s$ in $\{1,2,3,...,\ell \}$, $(\alpha_s-s)-(\beta_n-n)>(\alpha_s-s)-(\beta_m-m)$. If $M_{s,m}\neq0$, then $(\alpha_s-s)-(\beta_m-m)\geq0$. By the previous inequality, $(\alpha_s-s)-(\beta_n-n)>0$ and therefore $M_{s,m}\not= 0$.

Similarly, assume $M_{r,m} \not=0, M_{r,n} =  0$, and $M_{s,m}=0$.  $M_{r,m} \not=0$ implies that $(\alpha_r-r)-(\beta_m-m) \ge 0$.  Since $M_{r,n}=0$, we must have $(\alpha_r-r)-(\beta_n-n) < 0$.  Rearranging these two equations produces $\alpha_r-r \ge \beta_m-m$ and $\alpha_r-r < \beta_n-n$.  Combining these two expressions results in the inequality $\beta_m-m < \beta_n-n$.  Therefore $(\alpha_s-s)-(\beta_n-n) < (\alpha_s-s)-(\beta_m-m)$ for all $s$ such that $1 \le s \le \ell$.  Since $M_{s,m}=0$ and the subscript in $M_{s,n}$ must be smaller than the subscript in $M_{s,m}$, it must be the case that $M_{s,n}$ has a negative subscript, as desired.
\end{proof} 

\begin{cor}{\label{cor:zerocols}} 
In the associated matrix $M_{\alpha/\beta}$ for $I_{(\alpha/\beta)}$ (with $\alpha$ and $\beta$ compositions of length $\ell$), let $j,k \in\{1,2,...,\ell\}$.  Assume there exist $j$ rows $R_1, R_2, \hdots , R_j$ such that each row $R_i$ contains at least $k$ zeros.  Let $C_i=\{a_{i,1}, a_{i,2}, \hdots , a_{i,n_i} \}$ be the set of columns such that the entry in the column $a_{i,m}$ of row $R_i$ is equal to zero for all $m \in \{1, 2, \hdots , n_i \}$.  (Note that the number of elements $n_i$ in $C_i$ varies as $i$ varies, but is always greater than or equal to $k$.)  Then $\displaystyle{\cap_{i} C_i}$ has cardinality greater than or equal to $k$.
\end{cor}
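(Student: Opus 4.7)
The plan is to reduce the corollary to the simpler structural observation that the zero-column-sets of the rows of $M_{\alpha/\beta}$ are totally ordered by inclusion. Once this is known, $\cap_i C_i$ is just the smallest set in the chain $\{C_1, \dots, C_j\}$, and by hypothesis every set in the chain has at least $k$ elements, so the corollary follows immediately.

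The central step is therefore the following claim: for any two rows $r$ and $s$ of $M_{\alpha/\beta}$, either $C_r \subseteq C_s$ or $C_s \subseteq C_r$. I would prove this by contradiction. Suppose WLOG $r < s$ and there exist columns $m \in C_r \setminus C_s$ and $n \in C_s \setminus C_r$; that is, within the $2 \times 2$ submatrix on rows $\{r,s\}$ and columns $\{m,n\}$, the entries at positions $(r,m)$ and $(s,n)$ are zero while the entries at $(r,n)$ and $(s,m)$ are non-zero. If $m < n$, then $(r,m)$ sits in the top-left of that submatrix and the configuration matches the first hypothesis of the negative-crossing property (Theorem~\ref{thm:neg-cross}), forcing $(s,n)$ to be non-zero and contradicting $n \in C_s$. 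If instead $m > n$, then $(r,n)$ sits in the top-left and the configuration matches the second hypothesis, forcing $(s,m)$ to be zero and contradicting $m \notin C_s$. Either way the claim holds.

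With nestedness established, choose an index $i^*$ such that $C_{i^*}$ is a minimal element of the chain $\{C_1, \dots, C_j\}$. Then $C_{i^*} \subseteq C_i$ for every $i$, so $\bigcap_i C_i = C_{i^*}$, which has cardinality at least $k$ by hypothesis. I expect the case split in the nestedness claim to be the only spot requiring care: one has to correctly orient the $2 \times 2$ submatrix so that the zero and non-zero positions align with the hypotheses of the appropriate condition in Theorem~\ref{thm:neg-cross}. Beyond this bookkeeping, no deeper combinatorial input is needed—the rest is a one-line consequence of the fact that the minimum of a finite chain of sets equals their intersection.
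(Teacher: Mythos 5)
Your proof is correct, and it leans on exactly the same engine as the paper's --- Theorem~\ref{thm:neg-cross} applied to the $2\times 2$ submatrix in which the zero entries ``cross'' --- but you package the argument differently. The paper argues by contradiction after an extremal setup: it reduces to the largest admissible $k$, singles out the highest row $R_a$ with exactly $k$ zeros, and then locates a second row $R_b$ whose zero set fails to contain $C_a$, producing the forbidden crossing configuration. You instead prove the cleaner and slightly stronger structural fact that the zero-column sets of \emph{all} rows are totally ordered by inclusion, after which the corollary is immediate because the intersection of a finite chain equals its minimal element, which has cardinality at least $k$ by hypothesis. Your case split on $m<n$ versus $m>n$ is handled correctly (in each case the top-left entry of the ordered $2\times 2$ submatrix matches the hypotheses of the appropriate clause of the negative-crossing property), and your route avoids the mildly delicate bookkeeping in the paper's version, where one must note that $|C_a|=k$ exactly so that $|\cap_i C_i|<k$ forces some $c_a\in C_a\setminus C_b$, and that $|C_b|\ge k$ then forces some $c_b\in C_b\setminus C_a$. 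The nestedness formulation is arguably the more reusable statement; the paper's version gets to the contradiction marginally faster but proves nothing beyond the cardinality bound.
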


\begin{proof}
This corollary follows immediately from Theorem $\ref{thm:neg-cross}$; we prove this by contradiction. First, it is clear that for any set of $j$ rows, if we can prove this result for the largest $k$ possible (meaning each row contains at least $k$ zero terms while some rows contain exactly $k$ zero terms), then the corollary immediately follows for any smaller value of $k$. 
    
Start with an arbitrary set $R_1, R_2, \hdots , R_j$ of rows satisfying the hypotheses and let $k$ be the largest $k$ such that each of these rows contains at least $k$ entries equal to zero. 
 At least one of these $j$ rows contains exactly $k$ zero terms.  Let $R_a$ be the highest row with exactly $k$ zero terms.


Assume $|\cap_i C_i| < k$ to get a contradiction.  Then some row $R_b$ must contain a nonzero entry in one of the $k$ columns in $C_a$; call this column $c_a$.  Since $R_b$ must contain at least $k$ zero terms, $R_b$ must contain a zero term in a column that does not contain a zero term in row $R_a$, call this column $c_b$.  Therefore the submatrix consisting of rows $R_a$ and $R_b$ and columns $c_a$ and $c_b$ must be represented by either $\begin{bmatrix} H_{\ge} & H_{<}\\ H_{<} &H_{\ge}\end{bmatrix}$ or $\begin{bmatrix} H_{<} & H_{\ge} \\ H_{\ge} & H_{<}\end{bmatrix}$.  Both of these configurations violate the negative-crossing property.  This results in a contradiction, since Theorem~\ref{thm:neg-cross} states that an associated matrix $M_{\alpha/\beta}$ cannot violate the negative-crossing property.
\end{proof}

The diagram in Figure~\ref{fig:cross} shows two rows, each containing three negative subscripts, but two of them at different column positions. Notice that columns 1 and 4 violate Theorem~\ref{thm:neg-cross}.

\begin{figure}
$$\begin{bmatrix} H_{\ge} & H_{<} & H_{<}& H_{<}& H_{\ge}\\ H_{<} & H_{<}& H_{<}& H_{\ge}& H_{\ge}\end{bmatrix}$$
\caption{Two rows with three zero terms but only two all-zero columns}{\label{fig:cross}}
\end{figure}

We have already considered the situation in which the associated matrix of $\I_{(\alpha/\beta)}$ contains an entire row of negative terms. Now we consider the case in which the matrix has a row $r_1$ with $n-1$ negative terms and one non-negative term in column $c_1$. One could produce a non-negative term by taking the entry in row $r_1$ and column $c_1$.  However, consider what happens when we have two rows $r_1$ and $r_2$ of $M_{\alpha / \beta}$ with exactly $n-1$ negative entries each.  By Corollary~\ref{cor:zerocols}, the non-negative entry on both rows must be in the same column $c_1$. We know that each term in the immaculate function must take one entry from $r_1$ as well as one entry from $r_2$. Therefore, if we use the non-negative entry from row $r_1$ and column $c_1$, then the entry from row $r_2$ must be from a column other than $c_1$ since two entries from the same column can't be selected for the same term. Therefore, the entry from row $r_2$ must have a negative subscript, which automatically makes the term zero since $H_a=0$ whenever $a <0$.
    
We now generalize this approach to prove Theorem~\ref{thm:zeros} by contraposition. Recall the statement of Theorem~\ref{thm:zeros}. \\

{\bf Theorem ~\ref{thm:zeros}}.  {\it Let $\alpha=(\alpha_1,...,\alpha_{\ell})$ and $\beta=(\beta_1,...,\beta_{\ell})$ be compositions.  If there is at least one non-zero term (before cancellation) in the $H$-basis expansion of $\I_{\alpha/\beta}$, then for every $k$ between $1$ and $\ell$ inclusive, the set of all parts of $\hat{\alpha}$ which are smaller than at least $\ell-k+1$ parts of $\hat{\beta}$ has cardinality less than or equal to $k-1$.}

\begin{proof}  
We will prove the contrapositive of Theorem ~\ref{thm:zeros}.  Let $\alpha=(\alpha_1,...,\alpha_{\ell})$ and $\beta=(\beta_1,...,\beta_{\ell})$ be compositions and let $S_k = \{\hat{\alpha_i}|\hat{\alpha_i}$ is smaller than at least $l-k+1$ entries in $\hat{\beta}\}$. Assume there exists a $k$ such that $1 \le k \le \ell$ and $|S_k|\geq k$.  We will show that in this case, there are no non-zero terms in the $H$-basis expansion of $\I_{\alpha/\beta}$ at all.

The assumption that $|S_k| \ge k$ means that each of the $k$ selected rows includes at least $\ell-k+1$ negative subscripts.  
Corollary~\ref{cor:zerocols} implies that there are at least $\ell-k+1$ columns which contain only negative subscripts in these $k$ rows.  But this means there are at most $k-1$ column locations within these $k$ rows containing any non-negative subscripts.  By the Pigeonhole Principle, any selection of $k$ distinct column entries from among these $k$ rows must include a negative subscript, meaning the overall term is zero.  Therefore there cannot be any non-zero terms in the $H$-expansion.
\end{proof}

We now have proven Theorem~\ref{thm:zeros}, which provides a necessary condition for the existence of a non-zero term in the $H$-basis expansion of skew immaculate functions.  It is natural to ask whether that condition is sufficient to guarantee a non-zero term in the $H$-basis expansion. It turns out this condition is indeed sufficient, at least before cancellation. In the next section, we describe some background needed to prove this.

\section{Hall's Matching Theorem}{\label{sec:Hall}}

We now describe a well-known result in graph theory which we will use in our subsequent proofs.  To keep this article self-contained, we include several definitions before describing Hall's Matching Theorem.  A \emph{graph} is a collection of vertices $V=\{v_1, v_2, \hdots v_n\}$ and a set of edges $E$ which are pairs $\{v_i,v_j\}$ (with $i \not= j$) of vertices.  Note that we only consider simple graphs on finitely many vertices.

\begin{defn}
A \emph{bipartite graph} $G=(V,E)$ is a graph on two disjoint sets of vertices $V=V_1 \cup V_2$ such that every edge in $E$ includes one vertex from $V_1$ and one vertex from $V_2$.
\end{defn}

In other words, there are no edges between vertices from the same set. Since we are only considering simple graphs, there is at most one edge connecting any two vertices.  Bipartite graphs are sometimes denoted by their vertex sets $G(V_1,V_2)$.

\begin{defn}
In a bipartite graph, a \emph{complete matching} on one of the vertex sets $V_1$ is a set of $|V_1|$ edges $E'$ such that no vertex appears in more than one edge in $E'$.
\end{defn}

Informally, a complete matching in $G=(V,E)$ is a subgraph of $G$ consisting of a set of $2|V_1|$ vertices and $|V_1|$ edges such that each vertex has degree one.

One important result in graph theory is Hall's Matching Theorem, which provides necessary and sufficient conditions for a bipartite graph to admit a complete matching as a subgraph.

\begin{theorem}[Hall's Matching Theorem~\cite{Wil96}]  Let $G = G(V_1, V_2)$ be a bipartite graph.  For each subset $S$
of $V_1$, let $\rho(S)$ be the set of vertices of $V_2$ that are adjacent to at least one vertex of
$S$. Then a complete matching from $V_1$ to $V_2$ exists if and only if $|S|\leq|\rho(S)|$ for
each subset $S$ of $V_1$.
\end{theorem}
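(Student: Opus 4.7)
The plan is to prove both directions of the biconditional. The necessity direction is immediate: if a complete matching $E'$ exists, then for each $S \subseteq V_1$ the edges of $E'$ incident to $S$ pair the vertices of $S$ with distinct neighbors in $V_2$, all lying in $\rho(S)$, so $|\rho(S)| \ge |S|$. For sufficiency I would argue by induction on $n = |V_1|$. The base case $n = 1$ follows since $|\rho(\{v\})| \ge 1$ guarantees at least one edge to match the lone vertex.

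For the inductive step I would split into two cases depending on whether Hall's condition is strict on every proper non-empty subset. Case 1: suppose $|\rho(S)| > |S|$ for every non-empty $S \subsetneq V_1$. Pick any $v \in V_1$ and any $w \in \rho(\{v\})$, and let $G'$ be obtained by deleting both $v$ and $w$. For any $S \subseteq V_1 \setminus \{v\}$, the neighborhood of $S$ in $G'$ has size at least $|\rho(S)| - 1 \ge |S|$, so Hall's condition persists in $G'$. By induction $G'$ has a complete matching, and adjoining the edge $\{v, w\}$ produces a complete matching of $G$.

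Case 2: suppose some non-empty proper $T \subsetneq V_1$ satisfies $|\rho(T)| = |T|$. Hall's condition is inherited by the bipartite subgraph $H_1$ on $T \cup \rho(T)$, so induction yields a complete matching $E_1$ of $T$ into $\rho(T)$. To handle the remaining graph $H_2$ on $(V_1 \setminus T) \cup (V_2 \setminus \rho(T))$, I would verify Hall's condition there as follows: for any $U \subseteq V_1 \setminus T$, applying the hypothesis to $T \cup U$ in $G$ gives $|\rho(T \cup U)| \ge |T| + |U|$; since $\rho(T \cup U) = \rho(T) \cup \rho(U)$ and the neighbors of $U$ in $H_2$ are exactly $\rho(U) \setminus \rho(T)$, we obtain $|\rho(U) \setminus \rho(T)| \ge |U|$. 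Induction then produces a complete matching $E_2$ of $H_2$, and $E_1 \cup E_2$ is a complete matching of $G$.

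The main obstacle is executing Case 2 cleanly: the key combinatorial content lies in the inequality $|\rho(U) \setminus \rho(T)| \ge |U|$, which is what allows the induction to close, and one must be careful that the bookkeeping of ``neighbors outside $\rho(T)$'' genuinely coincides with the neighborhood taken inside $H_2$. Everything else is a routine induction, and the choice to condition on the existence of a ``tight'' proper subset is the standard trick that avoids having to build augmenting paths from scratch.
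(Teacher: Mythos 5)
Your argument is correct: necessity is the standard counting observation, and your sufficiency proof is the classical Halmos--Vaughan induction on $|V_1|$, with the two cases (Hall's condition strict on all proper nonempty subsets, versus the existence of a tight set $T$ with $|\rho(T)|=|T|$) handled properly --- in particular the key inequality $|\rho(U)\setminus\rho(T)|\ge |U|$ is derived correctly from $|\rho(T\cup U)|\ge |T|+|U|$ and the tightness of $T$, and the identification of the $H_2$-neighborhood of $U$ with $\rho(U)\setminus\rho(T)$ is exactly right. Be aware, though, that the paper does not prove this theorem at all: it is quoted as a known result from Wilson's textbook and only \emph{applied} (in Section~\ref{sec:sufficient}) to the bipartite row--column graph of the matrix $M_{\alpha/\beta}$, so there is no in-paper argument to compare against; your write-up would stand as a correct, self-contained proof of the cited result.
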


Consider the graph $G(V_1,V_2)$ with vertex sets $V_1=\{A,B,C,D,E\}$ and $V_2=\{1,2,3,4,5,6\},$ and edge set $E$ depicted below.  In this graph, for every subset $S$ of $k$ letters, the set $\rho(S)$ has cardinality greater than or equal to $|S|$.  For example, the set $\{A, B\}$ corresponds to $\rho(\{A,B\})=\{1,2,4 \}$ and $| \{1,2,4\}| =3 > | \{A,B\}|$.  Therefore this graph admits a complete matching as a subgraph, such as the matching described by the edge set $\{\{A,2\}, \{B,4\},\{C,5\},\{D,3\},\{E,6\}\}$. 

\begin{center}
\xymatrix{
A \ar@{-}[d] \ar@{-}[dr] & B \ar@{-}[d] \ar@{-}[drr] & C \ar@{-}[dll] \ar@{-}[d] \ar@{-}[drr] & D \ar@{-}[dl] & E \ar@{-}[dl] \ar@{-}[dr]  \\
1 & 2 & 3 & 4 & 5 & 6
}
\end{center}

However, the following graph does not satisfy the hypotheses of Hall's Matching Theorem.  Consider the subset $S=\{A,B,D \}$.  The set $\rho(\{A,B,D\}=\{2,4\}$ has cardinality $2$, which is less than the cardinality of $\{A,B,D\}$.  Therefore this graph does not admit a complete matching since there are not enough options for $A,B,$ and $D$.

\begin{center}
\xymatrix{
A \ar@{-}[dr] & B \ar@{-}[d] \ar@{-}[drr] & C \ar@{-}[dll] \ar@{-}[d] \ar@{-}[drr] & D \ar@{-}[d] & E \ar@{-}[dl] \ar@{-}[dr]  \\
1 & 2 & 3 & 4 & 5 & 6
}
\end{center}

\section{A sufficient condition for non-zero terms in a skew immaculate}{\label{sec:sufficient}}

We now state the converse of Theorem~\ref{thm:zeros}, which we will prove by contraposition.  Note that although these two theorems taken together provide a necessary and sufficient condition for a non-zero term to appear in the $H$-basis expansion of $\I_{\alpha/\beta}$, any such nonzero term might still be subject to cancellation.  Therefore this theorem does not guarantee that $\I_{\alpha/\beta}$ is nonzero, but it does allow us to rule out a large class of composition pairs whose skew immaculate functions are all zero.

\begin{theorem}{\label{thm:reverse}}
Let $\alpha$ and $\beta$ be compositions. If for every $k$ between $1$ and $\ell$ inclusive, the set of all parts of $\hat{\alpha}$ which are smaller than at least $\ell-k+1$ parts of $\hat{\beta}$ has cardinality less than or equal to $k-1$, then there exists at least one nonzero term in the H-basis expansion before cancellation. 
\end{theorem}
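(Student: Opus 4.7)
The plan is to recast the existence of a nonzero term as a bipartite matching problem and apply Hall's Matching Theorem in contrapositive form. Each term of $\ndet(M_{\alpha/\beta})$ arises from a permutation $\sigma$ of $\{1,\ldots,\ell\}$ and has the form $\pm H_{\hat{\alpha}_1 - \hat{\beta}_{\sigma(1)}} H_{\hat{\alpha}_2 - \hat{\beta}_{\sigma(2)}} \cdots H_{\hat{\alpha}_\ell - \hat{\beta}_{\sigma(\ell)}}$. This product is nonzero in $\NSym$ (before any cancellation against other terms) precisely when $\hat{\alpha}_i \ge \hat{\beta}_{\sigma(i)}$ for every $i$. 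I would therefore introduce the bipartite graph $G = G(V_1,V_2)$ with $V_1 = V_2 = \{1,\ldots,\ell\}$ and edge set $\{\{i,j\} : \hat{\alpha}_i \ge \hat{\beta}_j\}$, i.e., the positions of nonzero entries in $M_{\alpha/\beta}$; a nonzero term exists in the $H$-basis expansion if and only if $G$ admits a complete matching from $V_1$ to $V_2$.

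To finish, I would argue by contraposition. Assume $G$ has no complete matching; Hall's Matching Theorem then produces a subset $S \subseteq V_1$ with $|\rho(S)| < |S|$. Set $m = |S|$ and $T = \rho(S)$, so $|T| \le m-1$. For every $i \in S$ and every $j \in \{1,\ldots,\ell\} \setminus T$, the pair $\{i,j\}$ is not an edge, so $\hat{\alpha}_i < \hat{\beta}_j$. Since $|\{1,\ldots,\ell\} \setminus T| \ge \ell - m + 1$, each $\hat{\alpha}_i$ with $i \in S$ is smaller than at least $\ell - m + 1$ parts of $\hat{\beta}$. Taking $k = m$ in the hypothesis, every index $i \in S$ contributes to $S_k$, giving $|S_k| \ge m > m - 1 = k - 1$, contradicting $|S_k| \le k-1$.

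Notably, this direction does not require the negative-crossing property from Theorem~\ref{thm:neg-cross} or Corollary~\ref{cor:zerocols}; those were needed only in the forward direction of Theorem~\ref{thm:zeros} to upgrade ``many zeros per row'' into ``many all-zero columns in those rows''. Here the hypothesis is already a Hall-type condition on row-column adjacency, so the proof is essentially a translation. The main conceptual step is noticing that the alternating-sum nature of the Laplace expansion is irrelevant to asking whether a single term survives---we only need an underlying permutation hitting nonzero entries, which is exactly a perfect matching. The chief subtlety is counting parts of $\hat{\alpha}$ by index (as in the proof of Theorem~\ref{thm:zeros}), so that $m$ distinct rows in $S$ genuinely contribute $m$ to $|S_k|$ even when several $\hat{\alpha}_i$ values coincide.
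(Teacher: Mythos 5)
Your proof is correct and takes essentially the same route as the paper: both encode the nonnegative entries of $M_{\alpha/\beta}$ as a bipartite row--column graph, observe that a nonzero term is exactly a complete matching, and invoke Hall's Matching Theorem. The only difference is presentational --- the paper checks Hall's condition directly from the hypothesis (each $k$-set of rows contains a row with at least $k$ nonnegative entries, so its neighborhood has size at least $k$), while you verify it in contrapositive form via a Hall-violating set $S$; both arguments are valid and rest on the same translation.
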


\begin{proof}
The condition in the theorem is equivalent to requiring that for each set of rows of size $k \in \{1,2, \hdots , \ell \},$ there exists at least one row that contains at least $k$ nonnegative entries.  We shall prove that at least one term of the expansion of $\I_{\alpha/\beta}$ is nonzero before cancellation.

To find such a nonzero term, first note that this term will need to include an entry with a non-negative subscript from each row.  We use Hall's Matching Theorem to find the required term. To do so, we first treat each row of $M_{\alpha/\beta}$ as a vertex, so that this set $R$ is a set of size $l$.  We then treat the columns as another vertex set $C$, also of size $l$.  A pair $\{r,c\}$ (with $r \in R$ and $c \in C$) is an element of the edge set if and only if row $r$ contains a nonzero entry in column $c$. See Example \ref{hallex} for an associated matrix and its interpretation as a bipartite graph.

Finding a nonzero term in the $H$-basis expansion of a skew immaculate therefore corresponds to finding a complete matching between the row set $R$ and the column set $C$. This is because if a complete matching between the two sets exists, the edges in the matching tell us which row and column to select to produce a nonzero term. By the hypotheses of this theorem, given $k \in \{1,2, \hdots , \ell \}$, we know that for each subset of rows of size $k$ of the associated matrix, the corresponding collection of adjacent column vertices must be at least size $k$. 
 This is precisely the hypothesis needed for Hall's Theorem.  Hall's Theorem therefore implies that this condition is sufficient for a complete matching to exist for the graph $G(R,C)$. 
\end{proof}

\begin{ex}{\label{hallex}}
For example, let $\alpha=(10,7,9)$ and $\beta=(9,8,5)$ so that the associated matrix is 

$$M_{\alpha/\beta}=\begin{bmatrix} 
H_{1} & H_{3} & H_{7}  \\ 
H_{\shortminus 3} & H_{\shortminus 1} & H_{3}  \\ 
H_{\shortminus 2} & H_{0} & H_{4} \\ 
\end{bmatrix}.$$

The bipartite graph corresponding to this situation is shown below.

\xymatrix{
\underline{Rows} & & & & & & & \underline{Columns} \\ 
row \; 1 & \ar@[red][rrrrr] \ar@[black][rrrrrd] \ar@[black][rrrrrdd]& & & & & & column \; 1 \\
row \; 2 & \ar@[red][rrrrrd] & & & & & & column \; 2 \\
row \; 3 & \ar@[red][rrrrru]\ar@[black][rrrrr]& & & & & & column \; 3 }

\vspace*{.2in}

Here, the vertex corresponding to row $1$ is adjacent to all three column vertices since all entries in row $1$ have nonnegative subscripts.  Row $2$, however, only has a nonnegative subscript in column $3$ and is therefore only adjacent to the column $3$ vertex.  Similarly, the row $3$ vertex is only adjacent to the vertices corresponding to columns $2$ and $3$ since the subscript in column $1$ of row $3$ is negative.  The complete matching consisting of edges $\{ r_1c_1, r_2c_3, r_3c_2 \}$ is shown in the matrix below with the selected entries in bold.

$$\begin{bmatrix} 
\bf{H_{1}} & H_{3} & H_{7}  \\ 
H_{\shortminus 3} & H_{\shortminus 1} & \bf{H_{3}}  \\ 
H_{\shortminus 2} & \bf{H_{0}} & H_{4} \\ 
\end{bmatrix} $$

Such a matching is guaranteed to exist if the hypotheses of Theorem~\ref{thm:reverse} are met since these conditions are equivalent to those of Hall's Matching Theorem.
\end{ex}

\section{A class of composition pairs for which the corresponding immaculate function is non-zero}{\label{sec:cancel}}

Theorems \ref{thm:zeros} and \ref{thm:reverse} provide a necessary and sufficient condition under which the skew immaculate function $\I_{\alpha/\beta}$ has at least one non-zero term in its homogeneous basis expansion before performing any cancellation.  However, due to potential cancellations, this is not enough to ensure the entire expansion is non-zero. We can see this in the following example.

\begin{ex}{\label{ex:cancels}}
Let $\alpha=(9,5,5),\beta=(2,5,6),$ then $\hat{\alpha}=(8,3,2),\hat{\beta}=(1,3,3)$. So for $M_{\alpha/\beta}$, we have the following matrix:

$$\begin{bmatrix} H_7 & H_5 & H_5 \\ H_2 & H_{0} & H_0\\ H_{1} & H_{-1} &H_{-1} \end{bmatrix} \hspace*{0.5in}$$
\end{ex}
In this matrix, since the last row contains two negative entries, we have to avoid using them to produce a non-negative term. Therefore,we get $\I_{\alpha/\beta}=H_5H_0H_1-H_5H_0H_1$ by fixing $H_1$ for the last row. Clearly these two terms cancel each other out since they both equal $H_{51}$ but have opposite signs.

Before considering cancellations, we first attempt to reduce the problem by restricting the indexing shapes. Any skew shape can be obtained from a shape skewed by a partition via a series of ``straightening operations"~\cite{AllMas23}.  This procedure allows us to limit our focus to skewing only by partitions instead of by arbitrary compositions.  Allen and Mason's formula tells us that if $\I_{\alpha/\lambda}=0$ (for a partition $\lambda$), then any other skew shape $\alpha/\beta$ obtained from $\alpha/\lambda$ via this transformation will also satisfy $\I_{\alpha/\beta} =0$.  Therefore, it is enough to focus only on skewing by partitions.  The matrix $M_{\alpha/\lambda}$ (with $\alpha$ a composition and $\lambda$ a partition) has several useful properties.  These are summarized in the following two lemmas. 

\begin{lem}{\label{lem:IDcols}}
    In the matrix $M_{\alpha/\lambda}$ associated to composition $\alpha$ and partition $\lambda$, no two columns can be identical.  In fact, the subscripts of the column entries strictly increase along rows from left to right.
\end{lem}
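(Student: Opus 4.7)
The plan is to directly compute how the subscript of an entry in $M_{\alpha/\lambda}$ changes as we move one step to the right along a row, and show the change is always strictly positive when $\lambda$ is a partition. From Definition~\ref{def:skewdef}, the entry in row $i$ and column $j$ is $H_{(\alpha_i-i)-(\lambda_j-j)}$, so the subscript is
\[
s_{i,j} \;=\; \alpha_i - i - \lambda_j + j.
\]
Fixing the row index $i$ and comparing consecutive columns gives
\[
s_{i,j+1} - s_{i,j} \;=\; (\lambda_j - \lambda_{j+1}) + 1.
\]

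Since $\lambda$ is a partition, its parts are weakly decreasing, so $\lambda_j - \lambda_{j+1} \geq 0$ and hence $s_{i,j+1} - s_{i,j} \geq 1 > 0$. This proves the second (stronger) assertion: the subscripts strictly increase from left to right along every row. The first assertion, that no two columns of $M_{\alpha/\lambda}$ are identical, is an immediate corollary: if columns $j$ and $j'$ with $j < j'$ were identical, then in particular their entries in row $i$ would agree, contradicting $s_{i,j} < s_{i,j'}$.

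The argument is short and there is no real obstacle; the single point worth emphasizing is precisely the role of $\lambda$ being a partition rather than an arbitrary composition. If $\lambda$ were only required to be a composition, then $\lambda_j - \lambda_{j+1}$ could be as negative as $-\lambda_{j+1}$, and the inequality $s_{i,j+1} - s_{i,j} \geq 1$ would fail in general, so the hypothesis is used exactly once and in an essential way.
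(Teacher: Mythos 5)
Your proof is correct and is essentially the same argument as the paper's: both reduce to observing that $\hat{\lambda}_j = \lambda_j - j$ is strictly decreasing in $j$ because $\lambda$ is weakly decreasing, which forces the subscripts $\hat{\alpha}_i - \hat{\lambda}_j$ to strictly increase along each row. Your version just phrases it via the consecutive-column difference $(\lambda_j-\lambda_{j+1})+1 \ge 1$, which is an equivalent computation.
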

\begin{proof}
Since $\lambda$ is a partition, its entries must weakly decrease from left to right, meaning $\lambda_i\geq\lambda_j $ for all $i,j$ such that $ 1\leq i<j\leq l$. Therefore, when $i<j$, we have $\lambda_i-i>\lambda_j-j$.  Since $\hat{\lambda}_i=\lambda_i-i $ and $\hat{\lambda}_j=\lambda_j-j$, we have $\hat{\lambda}_i>\hat{\lambda}_j$ for all $i < j$.  This means no two columns in the matrix can be the same and in fact since $(M_{\alpha/\lambda})_{r,i}=H_{\hat{\alpha_r}-\hat{\lambda}_i}$ and $(M_{\alpha/\lambda})_{r,j}=H_{\hat{\alpha_r}-\hat{\lambda}_j}$, the subscripts increase along rows from left to right.
\end{proof}

The following two corollaries are immediate consequences of Lemma~\ref{lem:IDcols}.

\begin{cor}{\label{cor:split}}
 In the matrix $M_{\alpha/\lambda}$ associated to composition $\alpha$ and partition $\lambda$, for any row of length $l$ with exactly $k$ non-negative subscripts, the first $l-k$ entries in the row must be negative while the last $k$ entries must have non-negative subscripts.
\end{cor}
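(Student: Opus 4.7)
The plan is to derive Corollary~\ref{cor:split} as a direct consequence of Lemma~\ref{lem:IDcols}. Lemma~\ref{lem:IDcols} tells us that in $M_{\alpha/\lambda}$, for each fixed row $r$, the sequence of subscripts $\hat{\alpha}_r - \hat{\lambda}_1, \hat{\alpha}_r - \hat{\lambda}_2, \dots, \hat{\alpha}_r - \hat{\lambda}_\ell$ is \emph{strictly increasing} in $j$ (since $\hat{\lambda}_j$ is strictly decreasing in $j$). So I need only translate this monotonicity into a statement about signs.

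First I would observe the following suffix property: if some entry $(M_{\alpha/\lambda})_{r,j}$ has a non-negative subscript, then every entry $(M_{\alpha/\lambda})_{r,j'}$ with $j' > j$ has a strictly larger, hence also non-negative, subscript. Contrapositively, if $(M_{\alpha/\lambda})_{r,j}$ has a negative subscript, then all entries strictly to its left also have negative subscripts. Thus the set of column indices whose subscript in row $r$ is non-negative is necessarily a (possibly empty) final segment $\{\ell - k + 1, \ell - k + 2, \dots, \ell\}$ of $\{1,\dots,\ell\}$, where $k$ is the number of non-negative entries.

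Then I would conclude: since the $k$ non-negative subscripts occupy exactly the rightmost $k$ positions of the row, the remaining $\ell - k$ positions on the left must all contain strictly smaller (hence negative) subscripts, which is the claim.

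Essentially no obstacle is expected; the corollary is a short structural consequence of strict monotonicity, and the only thing to be careful about is to phrase the argument as a suffix/prefix statement rather than a case analysis. The entire proof can fit in a couple of sentences once Lemma~\ref{lem:IDcols} is invoked.
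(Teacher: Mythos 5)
Your proof is correct and follows essentially the same route as the paper: both arguments invoke the strict left-to-right increase of subscripts from Lemma~\ref{lem:IDcols} to conclude that the non-negative entries form a final segment of the row, forcing the $k$ non-negative subscripts into the last $k$ positions. No gaps.
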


\begin{proof}
    Since the subscripts in the row strictly increase from left to right by Lemma \ref{lem:IDcols}, all subscripts to the right of any non-negative subscript must be non-negative while all subscripts to the left of any negative subscript must be negative. Therefore, the first $(l-k)$th subscripts must be negative to ensure there are at least $l-k$ negative subscripts while the $(l-k+1)$th subscript must be non-negative to ensure there are at least $k$ non-negative subscripts. Combining both statements, the first $l-k$ subscripts in this row must be negative and the remaining $l-k+1$ subscripts must be non-negative.
\end{proof}

\begin{cor}{\label{cor:follow}}
    For a composition $\alpha$ and partition $\lambda$, if $H_0$ appears in column $t$ of row r of $M_{\alpha /\lambda}$, then for $H_b$ in column $j$ of row $r$, we have $b<0$ if $j<t$ and $b>0$ if $ j>t$.
\end{cor}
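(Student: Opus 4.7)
The proof is an essentially immediate consequence of Lemma~\ref{lem:IDcols}, so my plan is to leverage that lemma to nail down strict sign monotonicity along a single row of $M_{\alpha/\lambda}$. Recall the explicit form $(M_{\alpha/\lambda})_{r,j} = H_{\hat{\alpha}_r - \hat{\lambda}_j}$, and that Lemma~\ref{lem:IDcols} establishes $\hat{\lambda}_i > \hat{\lambda}_j$ whenever $i < j$. This translates immediately into $\hat{\alpha}_r - \hat{\lambda}_i < \hat{\alpha}_r - \hat{\lambda}_j$ for $i < j$, so the subscripts read along row $r$ form a strictly increasing sequence in $j$.

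With that monotonicity in hand, I would simply locate $H_0$ at position $t$, which by definition means $\hat{\alpha}_r - \hat{\lambda}_t = 0$. For any column $j < t$, strict monotonicity gives $\hat{\alpha}_r - \hat{\lambda}_j < \hat{\alpha}_r - \hat{\lambda}_t = 0$, so the subscript $b$ at position $(r,j)$ satisfies $b < 0$. For any column $j > t$, the same monotonicity gives $\hat{\alpha}_r - \hat{\lambda}_j > \hat{\alpha}_r - \hat{\lambda}_t = 0$, so $b > 0$. This is exactly the claim.

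There is essentially no obstacle to overcome, but the one point that needs care is the distinction between strict and weak inequality. The strictness in the conclusion (rather than the weaker $b \le 0$ and $b \ge 0$) comes precisely from the shift by $-i$ in the definition of $\hat{\lambda}$, which upgrades the weakly decreasing partition $\lambda$ into a strictly decreasing sequence $\hat{\lambda}$; this is what lets Lemma~\ref{lem:IDcols} deliver strict, not weak, inequality between consecutive subscripts. As an alternative route, one could deduce the corollary from Corollary~\ref{cor:split}: if $H_0$ sits at column $t$ in a row with $k = \ell - t + 1$ non-negative subscripts, then Corollary~\ref{cor:split} forces the first $t-1$ subscripts to be negative and the last $k$ to be non-negative, and combining this with the strict increase from Lemma~\ref{lem:IDcols} promotes ``non-negative'' to ``positive'' for every column strictly past $t$. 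Either route dispatches the corollary in only a few lines.
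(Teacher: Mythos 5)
Your proof is correct and is exactly the argument the paper intends: the paper offers no written proof, simply declaring the corollary an immediate consequence of Lemma~\ref{lem:IDcols}, and your use of the strict increase of subscripts along a row (coming from $\hat{\lambda}$ being strictly decreasing) is precisely that implicit argument made explicit.
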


After applying the restriction to make sure there is at least one non-zero term and skewing only by partitions, there are still some pairs $\alpha$ and $\lambda$ that result in $\I_{\alpha / \lambda} =0$. We observe that the associated matrix of such pairs seems to always contain two identical rows each containing the entry $H_0$. The following compositions and their associated matrix form an example. Let $\alpha=(2,2,5,5)$ and $\lambda=(3,3,3,3)$.  Then $\hat{\alpha}=(1,0,2,1)$ and $\hat{\lambda}=(2,1,0,-1)$, so that $$M_{\alpha /\lambda}=\begin{bmatrix} H_{-1} & H_0 & H_1 & H_2 \\ H_{-2} & H_{-1} & H_0 & H_1 \\ H_{0} & H_{1} &H_{2} & H_3 \\ H_{-1}&H_{0}&H_{1}&H_2\end{bmatrix}.$$

Notice rows 1 and 4 of the matrix are identical and both have second entry $H_0$. Therefore, the term $H_0H_0H_0H_2$ will be cancelled out by  $-H_2H_0H_0H_0$ and the term $H_1H_1H_0H_0$ will be cancelled by $-H_0H_1H_0H_1$, since $H_0=1$ and the signs are different in each pair.  This observation motivates the following theorem.

\begin{theorem}{\label{thm:nocancel}}
Let $\alpha$ be a composition and $\lambda$ be a partition (both of length $\ell$), and let $M_{\alpha / \lambda}$ be the associated matrix. If both of the following conditions are satisfied, then $\I_{\alpha / \lambda} \neq 0$.

\begin{enumerate}
\item In any set $S$ of $k$ rows of $M_{\alpha /\lambda}$, at least one row in $S$ contains at least $k$ non-negative subscripts.
\item If rows $i$ and $j$ are identical, then no entry in these rows is $H_0$.
\end{enumerate}

\end{theorem}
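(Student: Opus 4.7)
The plan is to invoke Theorem~\ref{thm:reverse} to guarantee at least one non-zero term in the $H$-basis expansion of $\I_{\alpha/\lambda}$ from condition (1), and then use condition (2) to argue that no such term is cancelled by another.

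I would first record two essential facts. Since $\lambda$ is a partition, the proof of Lemma~\ref{lem:IDcols} shows that $\hat\lambda$ is strictly decreasing, so in each row $i$ there is at most one column $j$ with $\hat\alpha_i = \hat\lambda_j$; hence each row contains at most one $H_0$. Second, since $H_0 = 1$ is the only relation among the generators $H_0, H_1, \ldots$ of $\NSym$, two monomials $H_{a_1} \cdots H_{a_\ell}$ and $H_{b_1} \cdots H_{b_\ell}$ are equal in $\NSym$ if and only if their subscript sequences agree after all zeros are deleted.

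The core step is to prove that under conditions (1) and (2), any two distinct permutations $\sigma \ne \tau$ whose terms in the non-commutative determinant are non-zero must yield distinct $H$-monomials. Suppose for contradiction they give equal monomials, and let $i$ be the smallest row on which $\sigma(i)\ne\tau(i)$. If neither $M_{i,\sigma(i)}$ nor $M_{i,\tau(i)}$ equals $H_0$, then equality of the first differing positions of the deleted-zero sequences, combined with the strict increase of subscripts along row $i$ from Lemma~\ref{lem:IDcols}, forces $\sigma(i) = \tau(i)$, a contradiction; and they cannot both be $H_0$ by uniqueness of the $H_0$-column in any row. So, up to swapping $\sigma$ and $\tau$, $M_{i,\sigma(i)} = H_0$ and $M_{i,\tau(i)}$ is a genuine positive $H$. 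Letting $j>i$ be the smallest row where $\sigma$ selects a non-$H_0$ entry, matching the next positions of $\bar\sigma$ and $\bar\tau$ yields $\hat\alpha_j - \hat\lambda_{\sigma(j)} = \hat\alpha_i - \hat\lambda_{\tau(i)}$. I would then iterate this reasoning along the cycle of $\pi = \tau^{-1}\sigma$ through row $i$: each $\sigma$-selection of $H_0$ forces a corresponding $\tau$-relation, and using that the $H_0$-column of each row is uniquely determined by its $\hat\alpha$-value (via the strict decrease of $\hat\lambda$), the cycle must close on a repeated $\hat\alpha$-value. This produces two distinct rows of $M_{\alpha/\lambda}$ with the same $\hat\alpha$, hence identical as rows of the matrix, one of which selects $H_0$. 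Condition (2) then supplies the contradiction.

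The main obstacle I anticipate is the bookkeeping in this cycle-propagation step: ensuring that the identities $\hat\alpha_j - \hat\lambda_{\sigma(j)} = \hat\alpha_{i'} - \hat\lambda_{\tau(i')}$ iterate all the way around the cycle of $\pi$ back to $i$, so that a repetition of $\hat\alpha$-values is forced rather than some weaker configuration. The strict monotonicity of $\hat\lambda$ from Lemma~\ref{lem:IDcols} is what makes this possible, since it converts the shift equations for $\hat\lambda$ along the cycle into rigid constraints on the $\hat\alpha$-values involved.
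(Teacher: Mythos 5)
Your opening move (Theorem~\ref{thm:reverse} supplies a nonzero term from condition (1)) and your two preliminary facts are fine, but the core step is false: under conditions (1) and (2) it is \emph{not} true that distinct nonzero permutation terms must yield distinct $H$-monomials. Take $\alpha=(3,5,4)$ and $\lambda=(3,3,3)$, so $\hat{\alpha}=(2,3,1)$, $\hat{\lambda}=(2,1,0)$ and
$$M_{\alpha/\lambda}=\begin{bmatrix} H_0 & H_1 & H_2\\ H_1 & H_2 & H_3\\ H_{-1} & H_0 & H_1\end{bmatrix}.$$
Both conditions hold (the rows have $3,3,2$ nonnegative subscripts and no two rows are identical), yet the identity permutation gives $H_0H_2H_1=H_{21}$ while the even permutation $\tau$ with $\tau(1)=3,\tau(2)=1,\tau(3)=2$ gives $H_2H_1H_0=H_{21}$: two distinct nonzero permutations, equal monomials, same sign. (Indeed $\I_{\alpha/\lambda}=2H_{21}-H_3-H_{111}$.) Your cycle-propagation does not close up on a repeated $\hat{\alpha}$-value because matching the reduced words only produces equations of the form $\hat{\alpha}_j-\hat{\lambda}_{\sigma(j)}=\hat{\alpha}_i-\hat{\lambda}_{\tau(i)}$ together with $\hat{\alpha}_i=\hat{\lambda}_{\sigma(i)}$ at the rows where an $H_0$ is chosen; these constraints can be satisfied with all the $\hat{\alpha}_i$ distinct, as the example shows, so condition (2) is never contradicted and no two identical rows are forced.

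The theorem survives because you do not need global injectivity; it suffices to exhibit a single monomial whose total coefficient is nonzero, and that is what the paper does. It constructs the permutation term that selects \emph{every} occurrence of $H_0$ in the matrix: condition (2) guarantees the $H_0$'s lie in distinct rows and distinct columns, and condition (1) together with Corollary~\ref{cor:split} lets a greedy argument complete these choices to a full nonzero term. Any other permutation producing the same reduced word must select the same number of $H_0$'s, hence all of them, hence by Lemma~\ref{lem:IDcols} (each positive subscript occurs in a unique column of its row) the same cells in every remaining row --- so it is the same term, and that monomial cannot be cancelled. In the example above this distinguished term is $-H_0H_3H_0=-H_3$, which does survive; the coincidence at $H_{21}$ is real but harmless. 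To repair your argument, replace the injectivity claim for all terms with this uniqueness claim for the all-$H_0$ term.
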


\begin{proof}
We first prove that if $M_{\alpha / \lambda}$ satisfies the conditions of Theorem~\ref{thm:nocancel} then the $H$-basis expansion of $\I_{\alpha/ \lambda}$ includes a non-zero term containing every occurrence of $H_0$ in $M_{\alpha /\lambda}$.  After that we will show that any such term cannot be canceled out.

The following algorithm constructs a term in the $H$-basis expansion of $\I_{\alpha/\lambda}$ which includes every occurrence of $H_0$.  

\begin{enumerate}
\item Select row $r_1$ with $\ell$ non-negative subscripts.  (Condition (1) implies that such a row exists.)
\begin{enumerate}
\item If there are multiple such rows, select the row containing $H_0$ if such a row exists.  (Corollary $\ref{cor:follow}$ and Condition (2) of Theorem~\ref{thm:nocancel} imply that there is at most one row with $\ell$ non-negative subscripts containing $H_0$. In addition, $H_0$ must be the first entry in that row.)
\item If there are multiple rows with $\ell$ non-negative subscripts and none contain a copy of $H_0$, select the highest row containing $\ell$ non-negative subscripts.  (In fact, selecting any of these rows will suffice.) 
\end{enumerate}
\item Select the term in the leftmost column of row $r_1$.
\item Delete row $r_1$ and column $1$ to obtain the matrix $M^{(1)}_{\alpha / \lambda}$.  (We will prove that $M^{(1)}_{\alpha / \lambda}$ satisfies the conditions of Theorem~\ref{thm:nocancel} since removing a row and column does not change these conditions.)
\item Repeat steps (1) through (3) for the matrix $M^{(1)}_{\alpha / \lambda}$, replacing $\ell$ with $\ell-1$ and $r_1$ with $r_2$ and finishing with the matrix $M^{(2)}_{\alpha / \lambda}$.
\item Continue this process until $\ell$ entries have been selected.
\end{enumerate}

This process selects every occurrence of $H_0$ since any row containing $H_0$ in column $c$ will be among those rows with $\ell-c+1$ non-negative entries and by Step (1b) of this algorithm will be the row chosen.  Therefore $H_0$ will be the term selected from this row.

Now we show that the deletion in Step 3 of this process preserves the conditions in the hypothesis of Theorem~\ref{thm:nocancel}.  First, it is clear that the second hypothesis of Theorem~\ref{thm:nocancel} remains true since deletion does not alter the rows that remain.  We use induction on the number of deletions to prove that the first hypothesis remains true.  The base case is the situation in which no deletions were made, so the hypotheses are trivially satisfied.

Next consider a set $S$ of $k$ rows of $M^{(i)}_{\alpha / \lambda}$, where $k \le \ell-i$ and $1 \le i \le \ell-1$.  (If $i=\ell$ then all rows and columns would be deleted and there would be nothing to check.)  By the inductive hypothesis, in the matrix $M^{(i-1)}_{\alpha / \lambda}$, at least one of these rows contains at least $k$ non-negatives by the first hypothesis of Theorem~\ref{thm:nocancel}.  By Corollary $\ref{cor:split}$, the last $k$ subscripts in that row must be non-negative.  Therefore, deletion of the first column would leave $k$ non-negative subscripts and the hypothesis remains true.

Next we prove that any non-zero term containing all occurrences of $H_0$ cannot be canceled out.  To see this, let $H_{\gamma}$ be the term containing every occurrence of $H_0$. 
 (Note that if $M_{\alpha / \lambda}$ contains no occurrences of $H_0$, every term whose subscripts are all positive satisfies this condition.)  Since $H_0=1$, we can write this term as $x:=(-1)^{sign(\sigma)}H_{a_1} H_{a_2} \cdots H_{a_{k}}$ such that $k \le \ell$ and $a_i >0$ for all $i$ and $\sigma$ is the permutation corresponding to this selection.  (If $a_i=0$ for all $1 \le i \le \ell$, then $x= \pm 1$.)

We now assume there exists another term $y$ that cancels out $x$ to get a contradiction.  Then $y$ must be in the form $y=(-1)^{sign(\sigma)+1}H_{a_1}H_{a_2} \cdots H_{a_k}$, otherwise it cannot cancel out $x$. Since the number of non-zero terms must be the same in $x$ and $y$, the term $y$ must also include all occurrences of $H_0$. Since $H$ functions are not commutative, $H_{a_1}$ must be chosen at the same row in $y$ as in $x$. By Lemma \ref{lem:IDcols}, there are no repeated entries within a row.  Therefore the $H_{a_i}$ appearing in $y$ must be chosen from the same row and column as in $x$ for each $1 \le i \le k$.  But then the sign of $y$ must equal the sign of $x$ and in fact they are the same term.  Since each term can only be selected once, this situation cannot occur and therefore $x$ cannot be canceled out.
\end{proof}

In this article, we have constructed a class of pairs of compositions for which the skew immaculate function is nonzero.  Although not every nonzero skew immaculate function is included in this class, it does provide a valuable setting for computations involving multiplication.  An important direction for future research is to identify all the pairs of compositions for which the skew immaculate is nonzero.

\bibliographystyle{amsplain}

\bibliography{MasonXiebib}
\label{sec:biblio}

\end{document}